\newtheorem{prop}{Proposition}
	\newtheorem{definition}{Definition}
	\theoremstyle{definition}
	\newtheorem{example}{Example}
	\newtheorem{remark}{Remark}
\begin{document}
\fontsize{12pt}{12pt}
\usefont{OT1}{cmr}{m}{n}
\begin{center}
\textbf{Solver-free optimal control for Linear Dynamical Switched System\\ by means of Geometric Algebra }
\end{center}

\begin{center}
Anna Derevianko, Petr Va\v s\'ik
\end{center}

\noindent
\textbf{Abstract.} 	We design an algorithm for control of a linear switched system by means of Geometric Algebra. More precisely, we develop a switching path searching algorithm for a two-dimensional linear dynamical switched system with non-singular matrix whose integral curves are formed by two sets of centralised ellipses. Then it is natural to represent them as elements of Geometric Algebra for Conics (GAC) and construct the switching path by calculating the switching points, i.e. intersections and contact points. For this, we use symbolic algebra operations, more precisely the wedge and inner products, that are realisable by sums of products in the coordinate form. Therefore, no numerical solver to the system of equations is needed. Indeed, the only operation that may bring in an inaccuracy is a vector normalisation, i.e. square root calculation. The resulting switching path is formed by pieces of ellipses that are chosen respectively from the two sets of integral curves. The switching points are either intersections in the first or final step of our algorithm, or contact points. This choice guarantees optimality of the switching path with respect to the number of switches. On two examples we demonstrate the search for conics' intersections and, consequently, we describe a construction of a switching path in both cases.  

\vspace*{12pt} 
\noindent
\textbf{Keywords.} switched system, geometric algebra, controllability, Clifford algebra.  
\vspace*{12pt}

\noindent
\textbf{AMS Classification.}  49M30, 11E88.

\fontsize{12pt}{12pt}
\usefont{OT1}{cmr}{m}{n}

	\section*{Introduction}
	Switched systems form a special case of hybrid dynamical systems with discrete and continuous dynamics.
	They are widely applied when a real system cannot be described by one single model. Numerous examples are given by engineering systems of electronics, power systems, traffic control and others. 
	Since the 1990s, research of switched systems stability has become very popular, see e.g.
	\cite{Vid,Sta}.
	The particular case of linear switched systems can be found in 
	\cite{Col}. 
	More recent literature about switched systems is represented by the works of Patrizio Colaneri
	\cite{Col}, Yuan Lin, Yuan Sun-Ge Wang,  and Jiang-Wang \cite{Lin}, Zhong-Ping, Yuan Wang \cite{Ji}; 
	the question of stability remains widely studied today.
	
	In the sequel, we use the power of geometric algebra (GA) to control a $2\times 2$ linear dynamical switched system with non-singular matrices where we exploit the fact that their control paths form a set of centralised ellipses and for a switched system control it is enough to find the point of a switch, i.e. the intersection of two ellipses. Classically, this leads to a system of quadratic equations which is simple to solve numerically but a certain inaccuracy is involved, i.e. a wrong control path is chosen. This error increases with the increasing number of switches due to the inaccurate initial conditions. Therefore we introduce an algorithm for searching ellipses' intersections with no solver needed, which together with a straightforward way of conic scaling leads to a control with minimal number of switches. In this sense, our control is optimal.
	
	If we restrict to the above described class of switched systems, we can design their (optimal) control by means of Geometric Algebra for Conics (GAC). This is an efficient geometric tool to handle both conics and their transformations as elements of a particular Clifford algebra, \cite{HNV}. Also, and this is particularly exploited by our approach, intersections and contact points of conics may be obtained by simple operations in GAC, namely by the wedge and inner product with a great advantage that they can be expressed in the form of sums of products, \cite{b1}, and thus no numerical solver is needed. Within our algorithm, we use not only GAC but also its subalgebra called Compass Ruler Algebra (CRA), \cite{hild2}, which is a conformal model of two-dimensional Euclidean space with circles as intrinsic geometric primitives. 
	
	Our main idea is based on a recent concept of GAC, from \cite{HNV}, yet there have been some attempts to construct various models for conics, see e.g. \cite{hitzer}. Because the concept in \cite{HNV} is rather new, we refer to \cite{confit} for better insight into the object description and their manipulation in GAC. Generally, our main observation is that in GAC, you can express the four intersecting points of two specific conics without knowing their precise coordinates and, consequently, you may construct a degenerate conic of two intersecting lines containing the conics' intersections. The lines may be separated by means of standard linear algebra operations, you either continue with a geometric approach and calculate the intersection with a circle in CRA, see Example \ref{circle_case}, or you put the line equation together with a quadratic equation of on a conic to find their intersections analytically, see Example \ref{intersect_general}. 
	
	We shall demonstrate our results on examples of specific switched systems. We provide outputs of our implementation in Python using a module clifford for symbolic GAC operations.

	\section{State of the art}
	Nowadays, the most popular approach in searching for a control of a switched system is connected with Lyapunov functions, \cite{lyap}, which requires sophisticated  algorithmic structures and precise criteria for checking the validity of the used method in a particular problem. The ordinary Lyapunov function is used to test whether a given dynamical system is stable (more precisely asymptotically stable), but does not provide any information about controllability. Particular specificity of switched systems is in its interaction between the continuous variable and the discrete state, which is not present in the standard control systems. In the works of  D. Liberzon \cite{Lib1,Lib2}, the author addresses the problems of stability and control for particular types of switched systems, using the analytical approach, i.e. Lyapunov function and Brockett’s condition for asymptotic stability by continuous
	feedback and controllability. The stabilization problem for switched positive regular linear systems by state-dependent switching was considered in \cite{din} and an  anti-bump switching control problem was introduced in \cite{han}.

	The issue of an optimal control has also been addressed several times. The most popular are problems of  time- or distance-optimality. For example, finding time-optimal control for a dynamical system was considered by Nasir Uddin Ahmed, \cite{DSC}, while for switched systems, analogical problem was considered in \cite{Sei}, where the author constructs a minimizing sequence and uses compactness property for finding a subsequence that minimizes the cost functional.   Another way of optimisation is a construction of a switching path with minimal amount of switches which is of our particular interest. This is clearly achieved if the switch points are contact points, i.e. consequent ellipses are circumscribed. We are not aware of any similar construction in literature.

	We should also pay attention to the existing methods for these intersections: in a Euclidean space, the problem of finding intersection points of two conics is reduced to solving the system of quadratic equations using numerical methods.
	%

	\section{Geometric algebra}
	\label{sec:1}
	By geometric algebra(GA) we mean a Clifford algebra with a specific embedding of a Euclidean space (of arbitrary dimension) in such a way that the intrinsic geometric primitives as well as their transformations are viewed as elements of a single vector space, precisely multivectors. Concept was introduced by D. Hestenes in \cite{hestenes} and has been used in many mathematical and engineering applications since, see e.g. \cite{b7,colour}. 
	
	Great calculational advantage of GA is that geometric operations such as intersections, tangents, distances etc., are linear functions and therefore their calculation is efficient. To demonstrate this, we refer to \cite{Perwass} for the basics of geometric algebras, especially for conformal representation of a Euclidean space. Indeed, three-dimensional Euclidean space is represented in a Clifford algebra $\mathcal Cl(4,1),$ and the consequent geometric algebra is often denoted as $\mathbb{G}_{4,1}$ with spheres of all types as geometric primitives and Euclidean transformations at hand, see eg. \cite{Dorst}. In the sequel, we use also the two-dimensional subalgebra $\mathbb{G}_{3,1}$ called a Compass Ruler Algebra (CRA), \cite{hild2}, which is an analogue of $\mathbb{G}_{4,1}$ for two-dimensional Euclidean space.
	
	Let us now recall the generalisation of $\mathbb {G}_{4,1}$,i.e geometric algebra for conics(GAC), proposed by C. Perwass to generalize the concept of (two-dimensional) conformal geometric algebra $\mathbb{G}_{3,1}$, \cite{Perwass}. Let us stress that we use the notation of \cite{HNV}. In the usual basis $\bar{n},e_1,e_2,n$, the embedding of a plane into $\mathbb{G}_{3,1}$ is given by
	$$
	(x,y)\mapsto \bar{n} + xe_1+ye_2+\frac12(x^2+y^2)n,
	$$
	where $e_1,e_2$ form the Euclidean basis and $\bar n$ and $n$ stand for a specific linear combination of additional basis vectors $e_3, e_4$ with $e_3^2=1$ and $e_4^2=-1$, giving them the meaning of the coordinate origin and infinity, respectively, \cite{Perwass}. Hence the objects representable by vectors in $\mathbb{G}_{3,1}$ are linear combinations of $1, x, y, x^2+y^2$, i.e. circles, lines, point pairs and points. If we want to cover also general conics, we need to add two terms: $\frac12(x^2-y^2)$ and $xy$. It turns out that we need two new infinities for that and also their two corresponding counterparts (Witt pairs), \cite{Lounesto}. Thus the resulting dimension of the space generating the appropriate geometric algebra is eight.

	Analogously to CGA and to the notation in \cite{Perwass}, we denote the corresponding basis elements as
	\begin{align} \label{basis} \bar{n}_+,\bar{n}_-,\bar{n}_\times,e_1,e_2,n_+,n_-,n_\times.
	\end{align}
	This notation suggests that the basis elements $e_1,e_2$  play the usual role of standard basis of the plane while the null vectors $\bar{n}$, $n$ represent the origin and infinity, respectively. Note that there are three orthogonal `origins' $\bar{n}$ and three corresponding orthogonal `infinities' $n$.
	In terms of this basis,  a point of the plane ${\bf x} \in\mathbb{R}^2$ defined by ${\bf x}=xe_1+ye_2$ is embedded using the operator $\mathcal C: \mathbb{R}^2 \to \mathcal{C}one\subset \mathbb{R}^{5,3}$, which is defined by
	\begin{align} \label{embedding}
	\mathcal C(x,y)=\bar{n}_+ + xe_1+ye_2+\frac12(x^2+y^2)n_+ + \frac12(x^2-y^2)n_- + xyn_\times.
	\end{align}
	The image $\mathcal{C}one$ is an analogue of the conformal cone. In fact, it is a two-dimensional real projective variety determined by five homogeneous polynomials of degree one and two.
	\begin{definition}
		Geometric Algebra for Conics (GAC) is the Clifford algebra $\mathbb G_{5,3}$ together with the embedding  \eqref{embedding} in the basis \eqref{basis}.
	\end{definition}
	Note that, up to the last two terms, the embedding \eqref{embedding} is the embedding of the plane into the two-dimensional conformal geometric algebra $\mathbb{G}_{3,1}$. In particular, it is evident that the scalar product of two embedded points is the same as in $\mathbb{G}_{3,1}$, i.e. for two points ${\bf x},{\bf y}\in\mathbb{R}^2$ we have
	\begin{align} \label{distance}
	\mathcal C({\bf x})\cdot \mathcal C({\bf y})=-\frac12 \|{\bf x}-{\bf y}\|^2,
	\end{align}
	where the standard Euclidean norm is considered on the right hand side. This demonstrates the linearisation of distance problems. In particular, each point is represented by a null vector.
	Let us recall that the invertible algebra elements are called versors and they form a group, the Clifford group, and that conjugations with versors give transformations intrinsic to the  algebra. Namely, if the conjugation with a $\mathbb{G}_{5,3}$ versor $R$ preserves the set $\mathcal{C}one$, i.e. for each ${\bf x}\in\mathbb{R}^2$ there exists such a point $\bar{{\bf x}}\in\mathbb{R}^2$ that
	\begin{equation} \label{ts} R\mathcal C({\bf x})\tilde{R} = \mathcal C( \bar{{\bf x}}),\end{equation}
	where $\tilde{R}$ is the reverse of $R$, then ${\bf x}\to\bar{{\bf x}}$ induces a transformation $\mathbb{R}^2\to\mathbb{R}^2$ which is intrinsic to GAC. See \cite{HNV} to find that the conformal transformations are intrinsic to GAC.
	
	Let us also recall the outer (wedge) product, inner product and the duality \begin{equation}\label{duality}A^*=AI^{-1},\end{equation}
	where $I=e_{12345678}$ is a pseudoscalar, i.e the highest grade element
	For our purposes, we stress that these operations correspond to sums and products only. Indeed, the wedge product is calculated as the outer product of vectors on each vector space of the same grade blades, while the inner product acts on these spaces as the scalar product. The extension of both operations to general multivectors adds no computational complexity due to linearity of both operations. 
	Let us also recall that if a conic $C$ is seen as a wedge of five different points (which determine a conic uniquely), we call the appropriate 5-vector $E^*$ an outer product null space representation (OPNS) and its dual $E$, indeed a 1-vector, the inner product null space (IPNS) representation. The reason is that if a point $P$ lies on a conic $C$ then
	$$P\cdot E=0\quad \text{and}\quad P\wedge E^*=0.$$
	Duality calculation is given by \eqref{duality}. Consequently, intersections of two geometric primitives are given as the wedge product of their IPNS representations, i.e. 
	$$C_1\cap C_2=E_1\wedge E_2$$
	for two conics $C_1,C_2$ and their IPNS representations $E_1$ and $E_2$, respectively, see \cite{HNV}.
	
	Let us describe the inner product representation more precisely.
	An element $A_I\in\mathbb{G}_{5,3}$ is the inner product representation of a geometric entity $A$ in the plane if and only if $A=\{{\bf x}\in \mathbb{R}^2: \mathcal C({\bf x}) \cdot A_I=0\}$.
	The representable objects can be found by examining the inner product of a vector and an embedded point.
	A general vector in the conic space $\mathbb{R}^{5,3}$ in terms of our basis is of the form $$v=\bar{v}^+\bar{n}_++\bar{v}^-\bar{n}_-+\bar{v}^\times\bar{n}_\times+v^1e_1+v^2e_2+v^+n_++v^-n_-+v^\times n_\times$$
	and its inner product with an embedded point is then given by
	$$
	\mathcal C(x,y)\cdot v=-\frac12(\bar{v}^++\bar{v}^-)x^2-\bar{v}^\times xy-\frac12(\bar{v}^+-\bar{v}^-)y^2+v^1x+v^2y-v^+,
	$$
	i.e. by a general polynomial of degree two. Thus the objects representable in GAC are exactly conics.
	We also see that the two-dimensional subspace generated by infinities $n_-,n_\times$ is orthogonal to all embedded points.
	In other words, the inner representation of a conic in GAC can be defined as a vector 
	\begin{align} \label{conic} 
	Q_I=\bar{v}^+\bar{n}_++\bar{v}^-\bar{n}_-+\bar{v}^\times\bar{n}_\times+v^1e_1+v^2e_2+v^+n_+.
	\end{align}

	It is well known that the type of a given unknown conic can be read off its matrix representation, which in our case for a conic given by vector \eqref{conic}  reads
	\begin{align} \label{conic_matrix} 
	Q=\begin{pmatrix}-\tfrac12(\bar{v}^++\bar{v}^-) & -\tfrac12\bar{v}^\times&\frac12v^1\\
	-\tfrac12\bar{v}^\times & -\frac12(\bar{v}^+-\bar{v}^-) &\frac12v^2\\
	\frac12v^1 & \frac12v^2 & -v^+\end{pmatrix}.
	\end{align} 
	The entries of \eqref{conic_matrix} can be easily computed by means of the GAC inner product:
	\begin{align*}  
	q_{11}&=Q_I\cdot\tfrac12(n_+-n_-), & & q_{12}=q_{21}=Q_I\cdot\tfrac12 n_\times,\\
	q_{22}&=Q_I\cdot\tfrac12(n_++n_-), & & q_{13}=q_{31}=Q_I\cdot\tfrac12 e_1,\\
	q_{33}&=Q_I\cdot \bar{n}_+, & & q_{23}=q_{32}=Q_I\cdot\tfrac12 e_2.
	\end{align*} 
	It is also well known how to determine the internal parameters of a conic and its position and the orientation in the plane from the matrix \eqref{conic_matrix}. Hence all this can be determined from the GAC vector $Q_I$ by means of the inner product.
	%
	%
	%
	%
	%

	The classification of conics is well known. The non-degenerate conics are of three types, the ellipse, hyperbola, and parabola. Now, we present the vector form \eqref{conic} appropriate to the simplest case, i.e. an axis-aligned ellipse $E_I$ with its centre in the origin and semi-axis $a,b$. The corresponding GAC vector is of the form
	\begin{equation}\label{ellipse_axis_aligned_IPNS}
	E_I = (a^2+b^2)\bar{n}_++(a^2-b^2)\bar{n}_-a^2b^2 n_+.
	\end{equation}
	More generally, an ellipse $E$ with the semi-axis $a,b$ centred in $(u,v)\in\mathbb{R}^2$ rotated by angle $\theta$ is in the GAC inner product null space (IPNS) representation given by
	\begin{align}\label{ellipse_IPNS}
	E&=\bar{n}_+-(\alpha\cos2\theta)\bar{n}_- -(\alpha\sin2\theta)\bar{n}_\times \nonumber \\ 
	&\quad + (u+u\alpha\cos2\theta-v\alpha\sin2\theta)e_1
	+(v+v\alpha\cos2\theta-u\alpha\sin2\theta)e_2 \\ \nonumber
	&\quad +\tfrac12\left(u^2+v^2-\beta-(u^2-v^2)\alpha\cos2\theta-2uv\alpha\sin2\theta\right)n_+.
	\end{align}

	\noindent
	For proofs and further details see \cite{HNV}.
	\begin{remark}\label{CRA_line}
		Note that a line in GAC is not an intrinsic primitive and thus we may understand it as a CRA object, \cite{HNV}. Therefore its IPNS representation has the same form as in CRA, \cite{hild2}, 
		$$n_1e_1+n_2e_2+dn_+$$
		where $n=(n_1,n_2)$ is the normal vector and $d$ is the distance from coordinate origin. The OPNS representation of a line passing through two points $p_1$ and $P_2$ is, \cite{HNV} of the form
		$$P_1\wedge P_2\wedge n_+\wedge n_-\wedge n_\times.$$
	\end{remark}
	
	Regarding the transformations, our algorithm uses explicitly just isotropic scaling which is non-Euclidean but behaves in the very same way. Indeed, the scaling is generated by a bivector and the action of this bivector on a GAC element is given by conjugation. To show the precise coordinate form and all possibilities we recall the following proposition, \cite{HNV}. Note that the reason for three variants is again given by the fact that conics live in a six-dimensional subspace of eight-dimensional GAC 1-vector space, where in our notation two infinities $n_-$ and $n_+$ are redundant, and therefore there are three types of scalings associated to the respective infinities.

	\begin{prop}
		\label{scalor}
		The scalor for a scaling by $\alpha\in\mathbb R^+$ is given by $S=S_+S_-S_\times$, where
		\begin{equation*}
		S_+=\tfrac{\alpha+1}{2\sqrt{\alpha}}+\tfrac{\alpha-1}{2\sqrt{\alpha}} \bar{n}_+ \wedge n_+, \quad
		S_- =\tfrac{\alpha+1}{2\sqrt{\alpha}}+\tfrac{\alpha-1}{2\sqrt{\alpha}}  \bar{n}_- \wedge n_-,\quad
		S_\times=\tfrac{\alpha+1}{2\sqrt{\alpha}}+\tfrac{\alpha-1}{2\sqrt{\alpha}}  \bar{n}_\times \wedge n_\times.
		\end{equation*}
	\end{prop}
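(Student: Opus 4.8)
The plan is to verify the proposition directly through the defining property \eqref{ts} of an intrinsic transformation: I would show that conjugation by $S=S_+S_-S_\times$ sends $\mathcal C(x,y)$ to a positive multiple of $\mathcal C(\alpha x,\alpha y)$, which is exactly the behaviour of isotropic scaling once we remember that points in the cone are represented homogeneously, i.e. up to positive scale. Since the three factors act on mutually orthogonal Witt pairs, I would first record the algebra of the null basis: each of $\bar n_\bullet,n_\bullet$ (with $\bullet\in\{+,-,\times\}$) is null, distinct pairs are mutually orthogonal and orthogonal to $e_1,e_2$, and the normalisation $\bar n_\bullet\cdot n_\bullet=-1$ is the one forced by the distance formula \eqref{distance}.

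The key observation is that each $S_\bullet$ is a hyperbolic rotor. Writing $B_\bullet=\bar n_\bullet\wedge n_\bullet$, a short expansion using $\bar n_\bullet^2=n_\bullet^2=0$ and $\bar n_\bullet\cdot n_\bullet=-1$ gives $B_\bullet^2=1$. Setting $t=\tfrac12\ln\alpha$, so that $\cosh t=\tfrac{\alpha+1}{2\sqrt\alpha}$ and $\sinh t=\tfrac{\alpha-1}{2\sqrt\alpha}$ (note that these indeed satisfy $\cosh^2 t-\sinh^2 t=1$), we have $S_\bullet=\cosh t+\sinh t\,B_\bullet$, and since $\tilde B_\bullet=-B_\bullet$ also $\tilde S_\bullet=\cosh t-\sinh t\,B_\bullet$.

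Next I would compute the commutation relations $B_\bullet\bar n_\bullet=-\bar n_\bullet$ and $B_\bullet n_\bullet=n_\bullet$, together with the mirrored identities $\bar n_\bullet B_\bullet=\bar n_\bullet$ and $n_\bullet B_\bullet=-n_\bullet$, while $B_\bullet$ commutes with every basis vector lying outside its own Witt pair (in particular with $e_1,e_2$). Feeding these into the sandwich $S_\bullet(\,\cdot\,)\tilde S_\bullet$ and using $B_\bullet^2=1$ collapses the hyperbolic functions into exponentials and yields $S_\bullet\bar n_\bullet\tilde S_\bullet=\tfrac1\alpha\bar n_\bullet$, $S_\bullet n_\bullet\tilde S_\bullet=\alpha n_\bullet$, and fixes all remaining basis vectors. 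Because $B_+,B_-,B_\times$ commute, the product $S$ then fixes $e_1,e_2$, rescales $\bar n_+\mapsto\tfrac1\alpha\bar n_+$, and rescales each $n_\bullet\mapsto\alpha n_\bullet$; the reverse $\tilde S=\tilde S_+\tilde S_-\tilde S_\times$ follows from the same commutativity.

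Finally I would assemble these into the action on the embedding \eqref{embedding}, obtaining $S\,\mathcal C(x,y)\,\tilde S=\tfrac1\alpha\,\mathcal C(\alpha x,\alpha y)$ after factoring out the common $\tfrac1\alpha$. I expect the only genuinely delicate point to be this last bookkeeping step: the conjugation does not leave the $\bar n_+$ coefficient equal to $1$, so one must invoke the projective (homogeneous) nature of the point representation to conclude that $\tfrac1\alpha\,\mathcal C(\alpha x,\alpha y)$ and $\mathcal C(\alpha x,\alpha y)$ describe the same point. Granting this, $S$ realises scaling by $\alpha$ in the sense of \eqref{ts}, which is the claim.
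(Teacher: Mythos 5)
Your proof is correct, but there is nothing in the paper to compare it against: the paper states Proposition \ref{scalor} and simply defers the proof to \cite{HNV}. Your argument is self-contained and is essentially the standard one (and, in substance, the one in the cited reference): identify each factor as a hyperbolic rotor $S_\bullet=\cosh t+\sinh t\,B_\bullet$ with $B_\bullet=\bar n_\bullet\wedge n_\bullet$, $B_\bullet^2=1$ and $t=\tfrac12\ln\alpha$; use the commutation relations to get $S_\bullet\bar n_\bullet\tilde S_\bullet=\alpha^{-1}\bar n_\bullet$, $S_\bullet n_\bullet\tilde S_\bullet=\alpha n_\bullet$, with all vectors outside the Witt pair fixed; then, since the three bivectors commute, conclude $S\,\mathcal C(x,y)\,\tilde S=\alpha^{-1}\mathcal C(\alpha x,\alpha y)$, which represents the point $(\alpha x,\alpha y)$. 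I verified the computations: the coefficients are indeed $\cosh$ and $\sinh$ of $\tfrac12\ln\alpha$, the sandwich products collapse as you claim, and the factored form $\alpha^{-1}\mathcal C(\alpha x,\alpha y)$ is exactly right. What your write-up buys over the paper's citation is transparency: it makes visible why the three factors are needed (one per redundant infinity $n_-,n_\times$ plus $n_+$, so that all three $n$-coefficients of the embedding \eqref{embedding} rescale coherently) and it isolates the one genuine subtlety, namely that \eqref{ts} holds only up to the overall factor $1/\alpha$, so the conclusion rests on the homogeneous (projective) nature of the point representation --- which is the correct resolution and is precisely why the paper calls scaling non-Euclidean. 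Two minor quibbles: the normalisations $\bar n_-\cdot n_-=\bar n_\times\cdot n_\times=-1$ are conventions of the basis construction rather than consequences of the distance formula \eqref{distance}, which constrains only the $+$ pair (the vectors $\bar n_-,\bar n_\times$ do not occur in embedded points); and in the final display the $\bar n_-$, $\bar n_\times$ coefficients of a general vector also rescale by $1/\alpha$, which is irrelevant for points but worth noting since the scalor is applied to conic vectors of the form \eqref{conic} elsewhere in the paper.
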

	\noindent For proof see \cite{HNV}.
	All transformations apply on a vector in GAC by conjugation \eqref{ts} of the appropriate versor formal exponential. This holds also for translations and rotations, for their precise form see \cite{HNV}.
	

	\section{Intersections and contact points in GAC} 
	In this section we provide a procedure for intersecting two conics, particularly ellipses with a common centre in the coordinate origin but in a general mutual position otherwise. Moreover, we consider a system of circumscribed ellipses as in  and show a procedure for detecting the first order contact points, i.e. the \ref{setting_inters} points where the ellipses touch with identical first order derivative. Again, the contribution of GAC lies in avoiding the use of a solver which leads to accuracy improvement.
	
	Let us first describe some differences to CRA or its 3-dimensional version CGA (Conformal Geometric Algebra). Crucial difference lies in the type of objects that are intrinsic to respective structures. For CRA (CGA), spheres (circles) are the geometric primitives that may be represented by specific elements. Taking into account that lines and planes are spheres with infinite radii and a point pair is a 1-dimensional sphere, we receive all geometric primitives for Euclidean geometry. Moreover, their intersection still remain such objects, indeed, an intersection of two spheres or two circles are circles or point pairs, respectively. Therefore, intersections that are realised by wedge of IPNS representations remain representatives of Euclidean primitives intrinsic to CRA (CGA). On contrary, in GAC the situation is different. Even if we restrict to the case of co-centric ellipses, their intersection is a "four point" which has no meaning in the sense of conic-sections. Indeed, a planar conic is uniquely generated by five points at least. This leads to an algorithm that may be used for co-centric conics (all types). On the other hand, the algorithm is still geometric-based and may be realised by a sequence of simple operations in GAC, i.e. there is no numerical solver involved.
	
	\subsection{Intersections}
	\label{sect_intersections}
	Let us now present the procedure for getting intersections of two co-centric ellipses, i.e. the set up according to Figure \ref{setting_inters}. Note that we may assume, without loss of generality, that the ellipses have four points of intersections. Other cases are not of our interest and would be recognized by the form of GAC element representing no conic. Furthermore we may assume that the ellipse centres are situated at the coordinate origin, otherwise the whole picture may be translated in GAC to fulfil this assumption. 
	
	\begin{figure}[h]
		\centering
		\includegraphics[width=70mm]{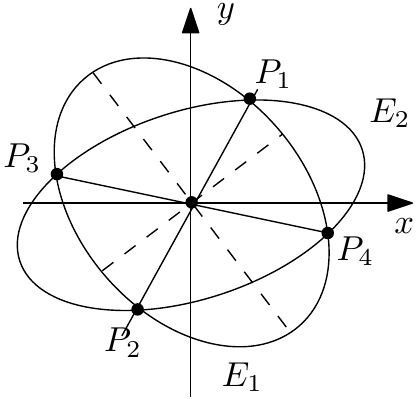}
		\caption{Initial setting of the intersection problem}
		\label{setting_inters}
	\end{figure}

	We start by taking two IPNS representations of ellipses $E_1$ and $E_2$ and wedge them. The result corresponds to the common points of both geometric primitives. This is standard operation intrinsic to any geometric algebra. In our case, we receive an IPNS representative of a four-point $E_1\wedge E_2=P_1\wedge P_2\wedge P_3\wedge P_4$ as in Figure \ref{setting_inters}. 
	
	Therefore, as the next step, we construct a degenerate conic, more precisely a pair of intersecting lines $(E_1\wedge E_2)^*\wedge \overline n_+$, where $\overline n_+$ represents origin of the Euclidean coordinates and therefore the common ellipse centre, and $(E_1\wedge E_2)^*$ is the four-point's OPNS representation. 
	
	
	Now we need into decompose the pair of lines to two separate single lines. First, we construct the matrix $Q$ of its quadratic form by \eqref{conic_matrix}. 
	Note that $Q$ is a symmetric singular matrix. To decompose a degenerate conic we follow an algorithm described in \cite{geom}. We recall the algorithm just to present that all operations involved are sums and products in the form of determinant calculations. The only numerical inaccuracy may be imported by a square root calculation.
	
	Indeed, to decompose a pair of intersecting lines into two distinct lines we have to find a skew-symmetric matrix $P$ formed by parameters $\lambda$, $\mu$, and $\tau$ such that $N=Q + P$	is of rank 1. Thus in our case we have 
	\begin{align} 
	N=	\begin{pmatrix}q_{11} & q_{12}&q_{13}\\
	q_{21}& q_{22} &q_{23}\\
	q_{31} & q_{32} & q_{33}\end{pmatrix}+	\begin{pmatrix} 0 &\tau& -\mu\\-\tau & 0& -\lambda\\ \mu & \lambda & 0 \end{pmatrix}.
	\label{matrix_decompose}
	\end{align} 
	The rank condition reads that every $2 \times 2$ submatrix determinant must vanish.
	Thus the necessary conditions for the parameters $\lambda$, $\mu$, and $\tau$ are:
	\begin{align*}
	\tau^2=-\begin{vmatrix}q_{11} & q_{12}\\q_{21}& q_{22} \end{vmatrix}, \quad \mu^2=-\begin{vmatrix}q_{11} & q_{13}\\q_{31}& q_{33} \end{vmatrix}, \quad \lambda^2=-\begin{vmatrix}q_{22} & q_{23}\\q_{32}& q_{33} \end{vmatrix}.
	\end{align*}
	This determines the parameters $\lambda$, $\tau$, and $\mu$ up to their sign.
	%
	In general case, to get precise values of $\lambda$, $\mu$, and $\tau$, one can take a nonzero column of the matrix dual to $Q$ and divide it with a specific factor, see \cite{geom}. In the case that the lines are passing through the origin, the division may be omitted and thus only the dual matrix, i.e. nine determinants of order 2, have to be calculated, see \cite{geom}.
	
	By taking an arbitrary nonzero row and a nonzero column in the matrix $N$ we get the coefficients of the respective separated lines. We shall now recall that a single line represents no conic and therefore it is not a geometric primitive intrinsic to GAC. Yet it is understood as an element of subalgebra CRA, i.e. a model of two-dimensional Euclidean space formed by a Clifford algebra $\mathcal{C}l(3,1),$ see \cite{hild2}.

	\begin{example}	\label{intersect_general}
		To construct an ellipse, we need the semi-axis lengths $a,b$, centre coordinates $c_1,c_2$ and the angle of rotation $\theta$, \eqref{ellipse_IPNS}. Let us consider two ellipses $Ell1$ and $Ell2$ with parameters $(a,b,c_1,c_2,\theta)=(2,4,0,0,0)$ and $(4,2,0,0,\frac{\pi}{6})$, respectively, see Figure \ref{example1}.
		\begin{figure}
			\begin{subfigure}[h]{0.48\textwidth}
				\centering
				\includegraphics[scale=.5]{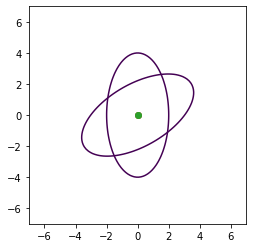}
				\caption{Ellipse setting}
				\label{example1}
			\end{subfigure}
			\begin{subfigure}[h]{0.4\textwidth}
				\centering
				\includegraphics[scale=.5]{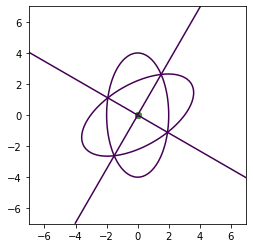}
				\caption{Pair of lines}
				\label{example2}
			\end{subfigure}
			\caption{Setting of Example \ref{intersect_general}}
		\end{figure}
		Their IPNS representations will then be of the form
		$$Ell1=\bar n_+ +\frac{3}{5}\bar n_- -\frac{16}{5}n_+$$
		and
		$$Ell2=\bar n_+ -\frac{3}{10}\bar n_- -\frac{3\sqrt 3}{10}\bar n_\times -\frac{16}{5}n_+.$$
		%
		\noindent		
		If transformed to OPNS, they become four-vector, therefore their representation corresponds to a wedge of four GAC points. By wedging the origin represented by $\bar n_+,$ we receive an OPNS representation of a degenerate conic, more precisely of a pair of intersecting lines. Their IPNS form is 
		$$-\frac{72}{25}\bar n_- -\frac{24\sqrt 3}{25}\bar n_\times.$$
		The type of the conic may be easily checked using their matrix form
		$$\begin{pmatrix}
		\frac{36}{25} & \phantom{-}\frac{12\sqrt 3}{25} & 0\\
		\frac{12\sqrt 3}{25} & -\frac{36}{25} &  0\\
		0 & \phantom{-}0 &0
		\end{pmatrix}. $$
		After normalization, the equation of this conic is
		$x^2-y^2+\frac{24\sqrt 3}{3}xy=0.$
		Thus we have a pair of lines containing all four ellipses' intersections and the origin, see Figure \ref{example2}.

		Let us provide all necessary inputs for procedure of line separation \eqref{matrix_decompose} in the same form:
		$$ \begin{pmatrix}
		\frac{36}{25} & \phantom{-}\frac{12\sqrt 3}{25} & 0\\
		\frac{12\sqrt 3}{25} & -\frac{36}{25} &  0\\
		0 & \phantom{-}0 &0
		\end{pmatrix}
		+
		\begin{pmatrix}
		0 & -\frac{12\sqrt 3}{25} & 0\\
		\frac{36\sqrt 3}{25} & -\frac{36}{25} &  0\\
		0 & \phantom{-}0 &0
		\end{pmatrix},$$
		i.e.
		$\mu=0, \quad \tau=-\frac{24\sqrt 3}{25}, \quad\lambda=0.$
		Therefore the pair of lines' matrix is of the form
		$$\begin{pmatrix}
		\frac{36}{25} & -\frac{12\sqrt 3}{25} & 0\\
		\frac{36\sqrt 3}{25} & -\frac{36}{25} &  0\\
		0 & \phantom{-}0 &0
		\end{pmatrix}$$
		and thus the form of separated lines may be easily derived according to the first (nonzero) row and column. After normalization we receive
		$$\frac{1}{2}x+\frac{\sqrt 3}{2}y=0, \quad \mathrm{and}\quad \frac{\sqrt 3}{2}x-\frac{1}{2}y=0.$$
		It is clear that they are perpendicular which has been expected due to settings' symmetries.
		%
		%

		
		Thus we get a system of quadratic (Ellipse) and linear (Line) equations, which is a reduction to  one quadratic equation, that does not require the use of the solver.
		In our case 
		\begin{center}
			$\begin{cases}
			x + \sqrt{3} y = 0,  \\
			\frac{x^2}{4} + \frac{y^2}{16} = 1,
			\end{cases}$\\
		\end{center}
		for $x=-\sqrt{3}y$,
		we get $13y^2-16=0$
		and	$y=\pm \frac{4\sqrt{13}}{13} $,
		$x=\mp \frac{4\sqrt{39}}{13}. $ 
		So we get intersection points $[\frac{4\sqrt{39}}{13},-\frac{4\sqrt{13}}{13}]$ and $[-\frac{4\sqrt{39}}{13},\frac{4\sqrt{13}}{13}]$.
		In the same way, by using the line  $	x + \sqrt{3} y = 0$, we get the following points:
		$[\frac{4\sqrt{39}}{13},\frac{4\sqrt{13}}{13}]$ and $[-\frac{4\sqrt{39}}{13},-\frac{4\sqrt{13}}{13}]$.
	\end{example}
	
	The described procedure can be applied to different types of the co-centred conics with 4 intersection points. Note that in GAC it is necessary to work with very high precision, because even small inaccuracy in coordinates of points to be wedged may lead to a different type of conic.
	\begin{example}
		Figure \ref{Python_general} demonstrates the output of the Python code for different types of the co-centred conics with 4 intersection points. It shows that our considerations are valid not only for ellipses but for an arbitrary pair of non-degenerate co-centered conics, see \cite{Loucka} for proofs.
		
		\begin{figure}
			\begin{subfigure}[h]{0.48\textwidth}
				\centering
				\includegraphics[scale=.65]{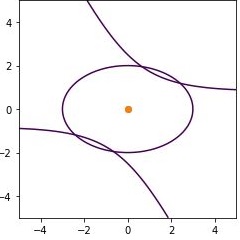}
				\caption{Ellipse and Hyperbola}
				
			\end{subfigure}
			\begin{subfigure}[h]{0.4\textwidth}
				\centering
				\includegraphics[scale=.65]{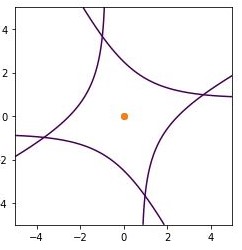}
				\caption{Hyperbola and Hyperbola}
				
			\end{subfigure}
			\caption{Non-degenerate co-centered conics}
			\label{Python_general}
		\end{figure}
		
	\end{example}
	
	\begin{example}\label{circle_case}
		In the case of axis-aligned ellipses we can apply a more geometric approach, \cite{BDVHS}. Given two ellipses $Ell1$ and $Ell2$ with parameters $(a,b,c_1,c_2,\theta)=(2,4,0,0,0)$ and $(4,2,0,0,0)$, respectively, we determine their IPNS representations according to \eqref{ellipse_axis_aligned_IPNS} in the form
		$$Ell1=\bar n_+ +\frac{3}{5}\bar n_- -\frac{16}{5}n_+$$
		and
		$$Ell1=\bar n_+ -\frac{3}{5}\bar n_- -\frac{16}{5}n_+,$$
		the intersecting points form a circle $C$ that may be constructed by $(Ell1\wedge Ell2)^*\wedge \bar n_+, $ \cite{BDVHS}, and thus represented by an element
		$$C=\frac{6}{5}\bar n_+ -\frac{96}{5}n_+,$$
		i.e. its standard equation will be
		$$x^2+y^2-\frac{32}{5}=0.$$
		Then we can construct a pair of intersecting lines $(Ell1\wedge Ell2)^*\wedge \bar n_+$ with IPNS representation
		$$\frac{6}{5}\bar n_- -\frac{96}{25}n_+,$$
		i.e. of the equation (after normalization)
		$-x^2+y^2=0.$
		
		The line decomposition procedure, although not necessary in this particular case, will lead to a pair of lines
		$y=x$ and $y=-x.$
		As CRA elements they are of the form
		$l_1=-\frac{\sqrt2}{2}e_1+\frac{\sqrt2}{2}e_2$ and $l_2=\frac{\sqrt2}{2}e_1+\frac{\sqrt2}{2}e_2,$
		respectively. Then it is enough to calculate the intersection $C\wedge l_1$ and $C\wedge l_2$ to get two point pairs $P_1,P_2$ in CRA. Consequently, a procedure for a point pair decomposition must be applied in the form
		\begin{align*}
		p_{i1} &=  \frac{- \sqrt{P_i \cdot P_i}+P_i}{n_+ \cdot P_i}, \: \: \:
		p_{i2} =   \frac{\sqrt{P_i \cdot P_i}+P_i}{n_+ \cdot P_i}\quad \mathrm{for}\quad i=1,2. \\
		\end{align*}
		In this very simple case we receive CRA points
		$$\bar n_+\pm\frac{4\sqrt 5}{5}e_1\pm\frac{4\sqrt 5}{5}e_2+\frac{16}{5}n^+,$$
		which means that the points of intersections are of the form $[\pm\frac{4\sqrt 5}{5},\pm\frac{4\sqrt 5}{5}].$
	\end{example}

	\subsection{Contact points}
	
	As mentioned above, by contact points we understand first order contact points, i.e. points where two curves have identical first order derivative. We shall describe how to receive a set of contact points for a given system of co-centred ellipses. Such system is formed as in Figure \ref{contact_pts} beginning with two intersecting ellipses $E_1$ and $E_2$. Then an ellipse $E'_2$ is constructed from $E_2$ just by scaling in such a way that $E'_2$ is circumscribed to $E_1$, i.e. they have two contact points. Then an ellipse $E'_1$ would be constructed from $E_1$ such that it would be circumscribed to $E'_2$ etc.  
	\begin{figure}[h]
		\centering
		\includegraphics[width=50mm]{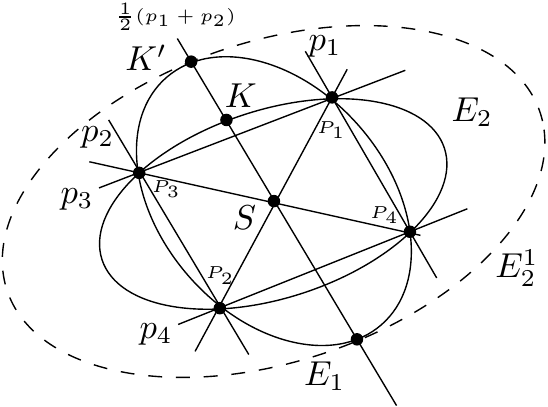}\hspace{10mm}
		\includegraphics[]{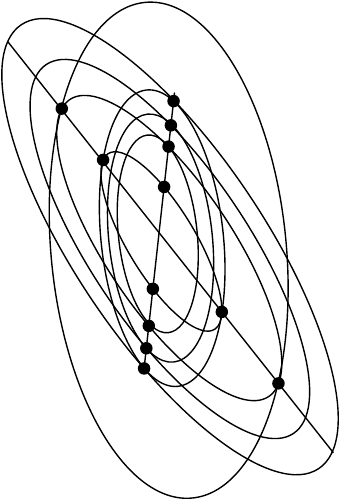}		
		\caption{Initial setting of contact points problem}
		\label{contact_pts}
	\end{figure}
	
	\begin{prop}
		Given a system of co-centric ellipses as in Figure \ref{contact_pts}, the contact points form a pair of intersecting lines. Furthermore, keeping the notation of \ref{contact_pts}, one of these lines is the axis of lines $p_1$ and $p_2$ denoted as $\frac{1}{2}(p_1+p_2)$ and, similarly, the other line is the axis of the lines $p_3$ and $p_4$ in Figure \ref{contact_pts}. 
	\end{prop}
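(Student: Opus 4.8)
The plan is to translate the whole configuration into elementary linear algebra via the quadratic-form matrix \eqref{conic_matrix}, and then to read off both assertions from the generalized eigenstructure of the associated matrix pencil. With the common centre at the origin (as already arranged by a GAC translation), every ellipse occurring in the construction is a level set $\{x\in\mathbb R^2 : x^\top A x=1\}$ of a symmetric positive-definite matrix $A$; by Proposition \ref{scalor} an isotropic scaling by $\alpha$ replaces $A$ by the positive multiple $\alpha^{-2}A$. Thus the chain $E_1,E_2,E_2',E_1',\dots$ corresponds to matrices $s_1A_1,s_2A_2,\dots$ with $s_i>0$, where $A_1,A_2$ are the forms of the two starting ellipses.

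First I would characterise first-order contact. Two co-centred ellipses with matrices $A,B$ touch at a point $x$ exactly when their gradients are parallel there, i.e. $Ax\parallel Bx$. For the family above this reads $A_1x\parallel A_2x$ independently of the scaling factors, so $x$ must be a generalized eigenvector of the pencil $(A_1,A_2)$, that is $A_1x=\lambda A_2x$. Since $A_1,A_2$ are positive definite, this pencil has exactly two eigendirections $v_1,v_2$, and (assuming distinct eigenvalues, which is precisely the generic `four intersection points' situation) they are conjugate with respect to both forms, $v_1^\top A_1v_2=v_1^\top A_2v_2=0$. As every contact event in the construction is governed by this same pencil, each pair of antipodal contact points lies on one of the two fixed lines $\langle v_1\rangle,\langle v_2\rangle$ through the origin. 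Hence all contact points lie on these two lines, which meet at the origin; this is the claimed pair of intersecting lines.

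For the axis statement I would exploit the conjugate-diameter property in the sharp form supplied by $A_1$. Consider the contact event realised along $v_2$, with contact points $\pm T$, $T\in\langle v_2\rangle$. The common tangent line at $T$ has Euclidean normal equal to the gradient $A_1T\propto A_1v_2$, and its two occurrences at $T$ and $-T$ are the lines $p_1,p_2$ of Figure \ref{contact_pts}; they are parallel and symmetric about the origin. Their direction is orthogonal to $A_1v_2$, and because $v_1^\top A_1v_2=0$ this direction is exactly $v_1$. In the IPNS form $n_1e_1+n_2e_2+d\,n_+$ of Remark \ref{CRA_line}, the lines $p_1,p_2$ share the normal $n\propto A_1v_2$ and carry opposite signed distances $\pm d$, so their axis $\tfrac12(p_1+p_2)$ has $d=0$: it is the line through the origin with normal $A_1v_2$, namely $\langle v_1\rangle$. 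The same computation applied to the contact event along $v_1$ identifies $\tfrac12(p_3+p_4)$ with $\langle v_2\rangle$, completing the proof.

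The main obstacle is purely metric: the contact directions $v_1,v_2$ are conjugate but not Euclidean-orthogonal, so one cannot simply diagonalise $A_1,A_2$ by a rotation, and the notion of `axis' must be handled through the Euclidean normals of the tangent lines rather than through pencil coordinates. The decisive point is the identity $v_1^\top A_1v_2=0$, which converts the algebraically defined eigendirection $v_1$ into the Euclidean tangent direction at the contact point lying on $\langle v_2\rangle$; everything else is bookkeeping. I would also dispose of the degenerate case of equal pencil eigenvalues (coincident contact directions) by noting that it corresponds to the excluded situation in which the starting ellipses fail to meet in four genuine points.
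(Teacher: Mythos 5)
Your argument takes a genuinely different route from the paper. The paper's proof is a short symmetry argument: the four intersection points of the co-centric ellipses form a parallelogram centred at the common centre $S$, the axis $\tfrac12(p_1+p_2)$ is the middle line of that parallelogram through $S$, and the identification of the contact points with this line is essentially left at the level of ``obvious from the symmetry properties''. Your pencil argument --- contact at $x$ iff $A_1x\parallel A_2x$, independence of the scaling factors $s_i$, the two eigendirections of the pencil of two positive-definite forms, and the conjugacy relation $v_1^\top A_jv_2=0$ --- supplies rigorous content that the paper's proof omits, and it also cleanly disposes of the degenerate case of equal eigenvalues. For the first assertion (the contact points form a pair of lines meeting at the centre) your proof is complete and in fact stronger than the one in the paper.

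For the second assertion there is, however, a misidentification that leaves a gap relative to the statement as the paper intends it. In Figure \ref{contact_pts} and in step 4 of the algorithm of Section \ref{sec:3}, the lines $p_1,p_2$ (and $p_3,p_4$) are \emph{not} the common tangent lines at the contact points: they are the chords joining the four intersection points $\pm P,\pm Q$ of the initial ellipses $E_1,E_2$, i.e.\ the sides of the parallelogram, constructed via Remark \ref{CRA_line} from the intersection points before any contact point is known (the algorithm uses their axis precisely in order to \emph{find} the contact points, so your reading would make the proposition circular). As written, your proof shows that the contact lines are the axes of the tangent-line pairs, not of the chord pairs. The repair is short and uses only tools you already have: since $\pm P,\pm Q$ lie on both ellipses, $(Q-P)^\top A_i(Q+P)=Q^\top A_iQ-P^\top A_iP=0$ for $i=1,2$; hence $A_1(Q+P)$ and $A_2(Q+P)$ both lie in the one-dimensional space $(Q-P)^\perp$, so $A_1(Q+P)\parallel A_2(Q+P)$, i.e.\ $Q+P$ (and likewise $Q-P$) is a pencil eigendirection. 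Thus the chord directions coincide projectively with $\{v_1,v_2\}$, the chords $p_1,p_2$ are parallel to, say, $v_1$ and symmetric about the origin, and your IPNS computation of $\tfrac12(p_1+p_2)$ then identifies the axis with $\langle v_1\rangle$ exactly as before, which closes the gap.
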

	
	\begin{proof}
		Taking into account that the ellipses are co-centric and their symmetry properties, it is obvious that 4 points of their intersection form a parallelogram with diagonals passing through the common centre. The line $\frac{1}{2}(p_1+p_2)$ is the middle line of the parallelogram and passes through the common centre $S$ of the ellipses. Note that the notation $\frac{1}{2}(p_1+p_2)$ for the axis of $p_1$ and $p_2$ is exactly the way to calculate this line in CRA. Indeed, this is true for IPNS representations of $p_1$ and $p_2$.
	\end{proof}
	
	\begin{prop}\label{scale_factor}
		Given a system of co-centric ellipses as in Figure \ref{contact_pts}, a scalor transforming an ellipse $E_2$ to $E'_2$ may be calculated as $SP = \frac{|SK'|}{|SK|}$, where $K$ and $K'$ are the intersection points of the ellipses $E_2$ and $E_1$ with the line $\frac{1}{2}(p_1+p_2)$ and S is the common centre of the ellipses.
	\end{prop}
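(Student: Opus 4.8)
The plan is to reduce the determination of the scale factor to a one-dimensional length comparison along a single line through the common centre, exploiting that isotropic scaling centred at $S$ acts on position vectors by multiplication. First I would place the common centre $S$ at the origin and recall from Proposition \ref{scalor} that an isotropic scaling by $\alpha\in\mathbb R^+$ sends each point $P$ to $\alpha P$; consequently it maps $E_2$ to the scaled ellipse $\alpha E_2$ and fixes, as a set, every line through $S$. In particular the line $\ell:=\tfrac12(p_1+p_2)$, which passes through $S$ by the preceding proposition, is invariant under this scaling.

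Next I would identify the contact point in two independent ways. By construction $E'_2=\alpha^* E_2$ for the circumscribing factor $\alpha^*$ that we wish to compute. The preceding proposition asserts that the contact points of $E'_2$ and $E_1$ lie on the pair of lines $\tfrac12(p_1+p_2)$ and $\tfrac12(p_3+p_4)$; restricting to the contact points that belong to $\ell$, such a point $T$ lies simultaneously on $E_1$ and on $\ell$, hence $T\in E_1\cap\ell=\{K',-K'\}$, since a centred ellipse meets a line through its centre in an antipodal pair.

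The core of the argument is then to match these two descriptions of $T$ along $\ell$. On one hand $T\in E'_2\cap\ell$; since $E_2\cap\ell=\{K,-K\}$ and scaling by $\alpha^*$ carries $\pm K$ to $\pm\alpha^* K$, we get $T\in\{\alpha^* K,-\alpha^* K\}$. On the other hand $T\in\{K',-K'\}$ from the previous step. Selecting the contact point on the ray from $S$ through $K$ forces the identity $\alpha^* K=K'$ of position vectors, and taking lengths yields $\alpha^*=\frac{|SK'|}{|SK|}$, which is precisely the claimed value of $SP$; the associated scalor is then recovered by substituting this $\alpha^*$ into Proposition \ref{scalor}.

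The step I expect to be the main obstacle is the sign and ray bookkeeping in the final identification: one must confirm that $K$ and $K'$ lie on the \emph{same} ray emanating from $S$, so that the positive factor $\alpha^*$ genuinely carries $K$ to $K'$ rather than to $-K'$. This is exactly where the circumscription hypothesis enters, since $E'_2$ arises from $E_2$ by a positive scaling and touches $E_1$ from outside, the crossing of $\ell$ with $E_2$ and the tangency point on $E_1$ occur on the same side of $S$, making the ratio positive. A secondary point worth verifying is that the two contact points on $\ell$ form the antipodal pair $\pm K'$, which follows from the central symmetry shared by all the co-centric ellipses and justifies restricting attention to one ray.
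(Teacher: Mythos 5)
Your proposal is correct and takes essentially the same approach as the paper: the paper's (much terser) proof likewise identifies the contact point of $E'_2$ and $E_1$ with the point $K'$ on the axis $\frac{1}{2}(p_1+p_2)$ and observes that scaling $E_2$ until contact carries the segment $SK$ onto $SK'$, giving $SP=\frac{|SK'|}{|SK|}$. Your write-up simply makes explicit the steps the paper leaves implicit --- the invariance of the central line under the scaling, the antipodal structure of the intersections, and the same-ray bookkeeping needed to rule out the factor mapping $K$ to $-K'$.
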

	
	\begin{proof}
		Due to the fact that $K'$ is the contact point of ellipses $E'_2$ and $E_1$, the scaling ellipse $E_2$ until the contact with $E_1$ means scaling the length section $SK$ up to the length of $SK'$, therefore  $SP = \frac{|SK'|}{|SK|}$. 
	\end{proof}
	
	\begin{remark}
		The transformation of $E_2$ to $E'_2$ is then given in GAC by a scalor according to Proposition \ref{scalor}. In the case of co-centered ellipses with the centre in the coordinate system origin we may just multiply the semi-axis lengths by the scaling factor.	
	\end{remark}	
	
	\section{Controllable $2\times2$ linear switched dynamical systems}\label{sec:2}
	
	Let us briefly recall the basic terminology in the switched systems theory. By a switched system we mean the following system	
	$$\dot{\boldsymbol{x}}(t) = f_{\sigma(t)}(\boldsymbol{x}(t)), \quad \boldsymbol{x}(0) = \boldsymbol{x}_0$$
	where $\boldsymbol{x} \in \mathbb{R}^m$ is called \textit {a continuous state}, $\sigma$ stands for \textit{a discrete state} with values from an index set $M : = \{1,\ldots,n\}$, and $f_{\sigma(t)}$, for $\sigma(t) \in M$, are given vector fields. 
	
	The behaviour of the dynamical system is regulated by the switching signal. Namely, at specific time moments, i.e. for $t=\tau_1,\dots,\tau_n,$ the system changes its setting from $\sigma(\tau_i)$ to $\sigma(\tau_{i+1})$, hence the trajectory of the system, starting from $t=\tau_i$, is given by the vector field $f_{\sigma(\tau_{i+1})}$ instead of $f_{\sigma(\tau_i)}$.
	In the works on switched systems, switching times can be random or given by some law. 
	In the sequel, we consider a different formulation of the problem, i.e. the switching signal is under our control.
	To guarantee that there exists a path connecting two arbitrary points, let us recall the following definition.
	\begin{definition}
		We say that the switched system   $$\dot{\boldsymbol{x}}(t) = f_{\sigma(t)}(\boldsymbol{x}(t)), \quad \boldsymbol{x}(0) = \boldsymbol{x}_0$$ is \textit{controllable} if 
		for any two points $A,B$ from the state space there exists a switching signal generating a continuous path from $A$ to $B$.
		
	\end{definition}
	
	The above definition corresponds to the concept of controllability for control systems of the form 
	$$\dot{\boldsymbol{x}} = f(\boldsymbol{x},\boldsymbol{u}), \quad \boldsymbol{x}(0) = \boldsymbol{x}_0,$$
	where the control $\boldsymbol{u}(t)$ plays the role of a switching signal.
	
	Particularly, linear switched systems, \cite{Col},  of the form $$\dot{\boldsymbol{x}}(t) = A_{\sigma(t)}\boldsymbol{x}(t), \quad \boldsymbol{x}(0) = \boldsymbol{x}_0\neq 0,$$
	where $A_1\dots A_n$ are given matrices, are of our interest. 
	
	More precisely, the case of $2\times 2$ matrices with both subsystems having pure imaginary eigenvalues is studied. This case has already been considered in \cite{Der}, and the main difference lays in using GAC as a suitable space for geometric operations with the ellipses, for elementary notions see \Cref{sec:1}. 
	
	First, consider the problem of oscillation of a spring pendulum under the condition of absence of external and friction forces    
	$$\ddot{\boldsymbol{x}}=-k\boldsymbol{x},$$ 
	with a switchable stiffness coefficient  $k>0$, that changes value from
	$k_1$ by joining and removing an additional spring  with a stiffness coefficient $k_2$. 
	Two cases can be considered. If the springs are connected in parallel, the parameter $k$ of the system switches between $k=k_1$ and  $k=k_1+k_2$. If the connection is series, the parameter $k$ of the system switches between $k=k_1$ and $k=\frac{k_1 k_2}{k_1+k_2}$.

	Let us rewrite the differential equation of the pendulum oscillations as a switched system:
	$${\dot{\boldsymbol{x}}(t) = A_i \boldsymbol{x}(t)}, \quad A_i\in\mathcal{M}at_2(\mathbb R), \quad i=1,2.
	$$
	Without the loss of generality, let us assume that we start and end with the first system $i=1$. 
	Suppose that two nonzero points (initial $A(x_1,y_1)$ and final $B(x_2,y_2)$) are given.

	By rewriting the coordinates of  $\boldsymbol{x}$ as $(x,y)$ we get that the solutions of the system $${\dot{\boldsymbol{x}}(t) = A_1(\boldsymbol{x}(t)),}
	$$
	can be determined in the form
	
	\begin{equation*}
	\begin{array}{l}
	x(t)=\gamma_1 \sin(\sqrt{\alpha_1} t)+\gamma_2 \cos(\sqrt{\alpha_1}t),\\
	y(t)=\sqrt{\alpha_1}\gamma_1 \sin(\sqrt{\alpha_1} t)-\sqrt{\alpha_1}\gamma_2 \cos(\sqrt{\alpha_1}t),\quad \gamma_1, \gamma_2\in\mathbb R.\end{array}
	\label{xy1} \end{equation*}
	The trajectories for the system with pure imaginary eigenvalues are given by ellipses. In the case $\mathrm{Tr}A_i=0$ , i.e. the case of the spring pendulum without damping, for example, $$A_i = \begin{bmatrix}
	0 & 1 \\
	-\alpha_i & 0
	\end{bmatrix},\quad \alpha_i\in\mathbb R^+,
	$$
	we have an axis-aligned ellipse, while if $\mathrm{Tr}A_i\neq0$, then the ellipses are rotated and the given switched system is equivalent to the equation describing the oscillatory system with damping. In this case the rotation angle can be calculated by means of the conic matrix \eqref{conic_matrix} in terms of geometric algebra:
	$$\theta=\begin{cases}
	\arctan\left(\frac{1}{q_{12}}\left(q_{22}-q_{11}-\sqrt{(q_{11}-q_{22})^2+q_{12}}\right)\right),\quad  q_{12}\neq0\\
	0,\quad \,  q_{12}=0,\quad q_{11}<q_{22}\\
	\frac{\pi}{2},\quad q_{12}=0,\quad q_{11}>q_{22}
	\end{cases}$$
	

	
	\section{Algorithm for a switching path construction}\label{sec:3}
	
	In the following, we describe the algorithm for finding a control of a switched system, i.e. finding a path composed of the systems' integral curves from the initial point $A$ to the endpoint $B$ such that the number of switches is minimal. Consider the case $n= 2$, i.e. only two systems are included, and both starting and final ellipse belong to the same family. To apply the GAC based calculations, it is necessary to get the exact GAC form of the representatives of both families of ellipses. Thus the system of ODEs is solved numerically (e.g. by Runge-Kutta method) with the initial condition at the starting point $A$. This will give us a set of points representing the initial ellipse. After applying the GAC conic fitting algorithm, \cite{HNV}, we get the ellipse in IPNS representation. Note that according to \cite{Loucka}, the algorithm may be further specified by prescribing the resulting ellipse to be axis-aligned and with its centre placed in the origin. This makes the initial trajectories very precise.  
	\begin{enumerate}
		\item Get $ A,B$, the starting and final point, respectively, i.e. get their conformal embedding $\mathcal C(A), \mathcal C(B)$ to GAC, \eqref{embedding}.
		\item Find the IPNS representation of the initial ellipse $E_1^1$ by conic fitting algorithm. Let us denote its semiaxis by $a$ and $b.$
		\item Find the final ellipse $E_f$ in the following two steps:
		\begin{itemize}
			\item[$\bullet$] Construct a line $l$ passing through the points $\mathcal C([0,0])=\bar n^+$ and $\mathcal C(B)$ according to Remark \ref{CRA_line}: $$l = \mathcal C(B)\wedge n_+ \wedge \bar{n}_+ \wedge n_- \wedge n_\times.$$ Find the intersection point $C = E_1^1 \cap l$ of the line and the starting ellipse, i.e. solve a quadratic equation in a Euclidean space as in Example \ref{intersect_general}.
			\item[$\bullet$] According  to Proposition \ref{scale_factor}, the scale parameter between the starting and final ellipse is $${{SP}_1=\frac{|\bar{n}_+\cdot \mathcal C(B)|}{|\bar{n}_+\cdot \mathcal C(C)|}}.$$ Construct the scalor according to Proposition \ref{scalor} and the final ellipse $E_f$ by \eqref{ts} as $$E_f = S_+ S_- S_\times E_1^1 \bar{S}_\times \bar{S}_- \bar{S}_+.$$ 
		\end{itemize}
		\item Find the first intermediate ellipse $E_2^1$. Note that lower index shows the number of subsystem, to which ellipse belongs.
		Take e.g. $[0,b]$ as initial condition and find IPNS representation of the sample ellipse $E_s$ by GAC conic fitting algorithm, \cite{Loucka}. In order to get the circumscribed ellipse $E_2^1,$ we need $E_s$ to have four intersection points with $E_1^1$. This can be checked easily by determining the type of the conic $(E_1^1\wedge E_s)^*\wedge \bar n_+$ and we shall scale $E_s$ by a factor $\alpha <1$ as in \eqref{ts} until the conic type of $(E_1^1\wedge E_s)^*\wedge \bar n_+$ is two intersecting lines. Then continue. 
		\begin{itemize}
			\item[$\bullet$] Find the intersections $E_1^1\wedge E_2^1$ according to Section \ref{sect_intersections}.
			\item[$\bullet$] Construct pair of lines $p_1$ and $p_2$ according to Remark \ref{CRA_line} and calculate their axis $p=\frac{1}{2}(p_1+p_2)$. To recognize the correct line one can use the inner product with the lines determined by the ellipse $E_1^1$ semiaxis denoted also as $a$ and $b$. Indeed, $a\cdot p \leq b\cdot p$ which is clear from Figure \ref{contact_pts} and from the properties of inner product similar to the scalar product of vectors. Clearly, in IPNS representation both $a$ and $p$ are 1-vectors.
			\item[$\bullet$] Construct the intersection of $p$ and  $E_1^1$ by $P_{t12}=E_1\wedge p$. Then $P_{t12}$ is a point pair of contact points $P_{t1}$ and $P_{t2}$.
			\item[$\bullet$] Calculate the scaling parameter $\alpha$ between the ellipses $E_s$ and $E_2^1$ as $\alpha=\frac{\parallel SP_{t1}\parallel}{a'},$ where $a'$ is the length of $E_s$ semiaxis. Note that the ellipse parameters may be easily computed from the matrix \eqref{conic_matrix}. Correctness of this calculation follows from Proposition \ref{scale_factor}.
			\item[$\bullet$] Construct $E_2^1$ by rescaling $E_s$.
		\end{itemize}
		\item Check the intersection between $E_f$ and $E_2^i$, where $i=1,2..$ is the number of additional ellipse in the following steps. 
		\begin{enumerate}
			\item If $E_f \cap E_2^i \neq \emptyset \implies$ find the intersection points of all ellipses, get the path from $A$ to $B$ by choosing the nearest point with respect to the path evolution. This will switch to final ellipse.
			\item If $E_f \cap E_2^i = \emptyset \implies$ calculate the scaling parameter $SP$ according to Proposition \ref{scale_factor}.
		\end{enumerate}
		By scaling $E_1^i, E_2^i$ using $SP$ get new pair of circumscribed ellipses	\begin{equation*}
		E_2^{i+1} := \text{scale}(E_2^i,SP),\quad
		E_1^{i+1} := \text{scale}(E_1^i,SP), 
		\end{equation*}
		Get intersection $E_1^{i+1} \cap E_2^{i+1}$, add points of intersection to the list of switching points and return to the beginning of the step 5  until $E_f\cap E_2^{j} \neq \emptyset$ for some $j$. This cycle constructs the sequence of ellipses from the starting ellipse to the final one.
		
	\end{enumerate}
	
	As a result, the above algorithm provides a sequence of switching points as well as a sequence of trajectories in GAC. For example of the resulting path see Figure \ref{fig:ex2}.
	Consider the following examples, which generalize the system from \cite[p. 6]{Cop}.

	%
	%
	%
	%
	%
	
	\begin{example}\label{ex:axis_aligned}
		Oscillatory system without damping.
		Consider the switched system $\dot{\boldsymbol{x}} = A_i \boldsymbol{x}$ where $A_1 = \begin{pmatrix}
		0 & 1\\
		-2 & 0
		\end{pmatrix},
		A_2 = \begin{pmatrix}
		0 & 1\\
		-\frac{1}{2} & 0
		\end{pmatrix}$. The initial point is $(2,5)$, and we need to find the way to the point $(12,22)$
		The set of used ellipses can be seen in Figure \ref{fig:ex1} while the set of switching points is
		$\{(0;-5,74456),(8,12404;0),(0;11,48913),(-16.24808;0),(0;-22,97825),\\(23,2054167141;12.86501593890354)\}.$ This is a result of Python code written in a module Clifford according to the algorithm in Section \ref{sec:3}. Note that the red points in Figure \ref{fig:ex1}, left, form the set of pints generated by Runge-Kutta method and you can see the fitted conic, too. 
		\begin{figure}[h]
			\includegraphics[width=68mm]{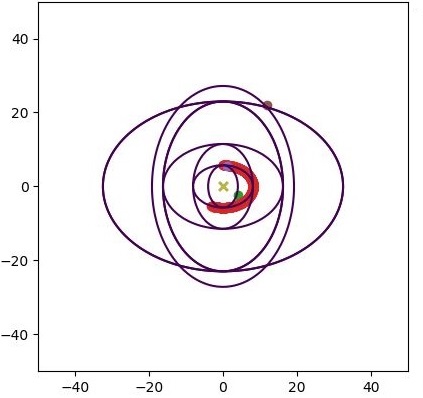}
			\includegraphics[width=68mm]{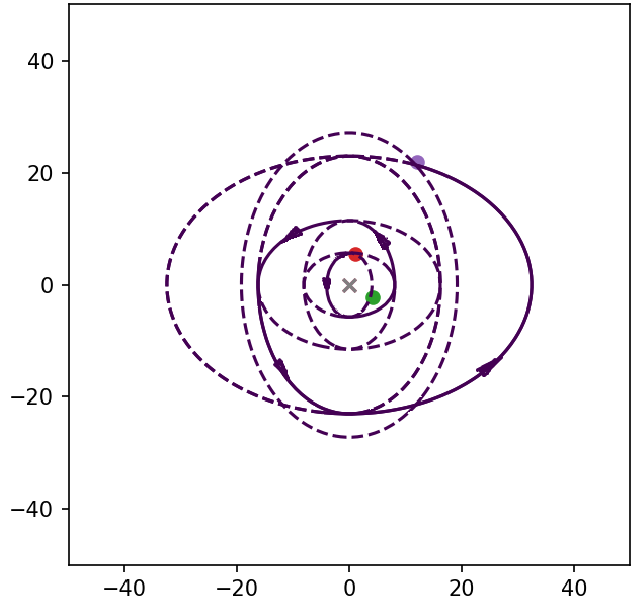}
			
			\caption{Example \ref{ex:axis_aligned}}
			\label{fig:ex1}
		\end{figure}

		%
		%
		
	\end{example}
	\begin{example}\label{ex:general} 
		Consider the switched system $\dot{\boldsymbol{x}} = A_i \boldsymbol{x}$ where $A_1 = \begin{pmatrix}
		0 & 1\\
		-2 & 0
		\end{pmatrix},\\
		A_2 = \begin{pmatrix}
		1 & 1\\
		-2 & 1
		\end{pmatrix}$. The initial point is $(2,5)$, and we need to find a path to the point $(30,22).$
		Both of the matrices have pure imaginary eigenvalues, so the system is switching between ellipses. Ellipses of the second family are rotated. That means that the second subsystem describes one of the cases of the oscillatory system with damping. These can also produce other types of conics, e.g. spirals, but that case is not the point of our interest The set of switching points is $\{(-2,88653912573;-3,697011208397),\\		(-4.23042514649;8.76807759024), (5,98270815467; -7,662511450902),\\ (-8,76807759024;18,1729216252),
		(-15,88149952097;-12,39990012429),\\ (-11,14111993673;43,01897176660)\}$ and the switching path calculated in Python module Clifford can be seen in Figure \ref{fig:ex2}.
		\begin{figure}[h]
			\centering
			\includegraphics[width=70mm]{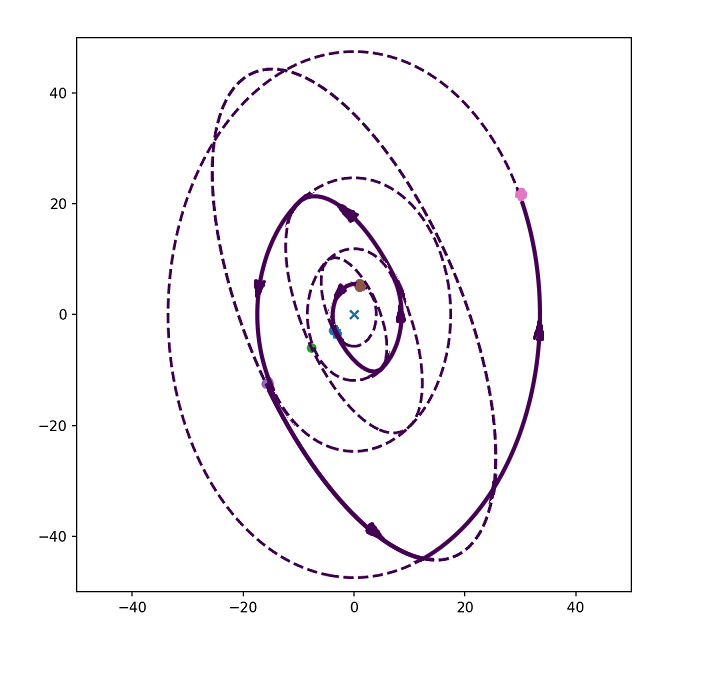}
			
			\caption{Example 4}
			\label{fig:ex2}
		\end{figure}
		
		%
		%
		
	\end{example}
	
	
	%
	%
	%
	%
	%
	%
	%
	
	
	\section{Conclusion and discussion}
	
	We have proposed a novel algorithm for optimal control of a switched dynamical systems with purely imaginary eigenvalues of both matrices. Namely, we constructed a switching path consisting of circumscribed ellipses with switching points at exactly contact points, which guaranteed minimal number of switching points. The whole construction has been implemented in Python module Clifford using its standard commands and functions for Geometric Algebra for Conics (GAC). Main advantage of GAC consists in a possibility of simple representation of transformed objects (for example rotated and scaled ellipses) together with a possibility of effective circumscribed ellipse construction. We stress that our geometric approach eliminates the need for any solver and therefore it minimizes numerical errors. 
	
	We demonstrated a complete geometric procedure for two families of axis-aligned, ellipses in Examples \ref{circle_case} and \ref{ex:axis_aligned}, where we demonstrated symbolic and Python calculations, respectively. In addition, we used a property of GAC that it contains a two-dimensional conformal geometric algebra CRA, where our calculations were completed. This case corresponds to an oscillatory switched system without damping. But our approach applies also for damped systems where the integral curves are formed by rotated ellipses, i.e. non axis-aligned, which we demonstrated in Examples \ref{intersect_general} and \ref{ex:general}. Also in this case no solver was needed because, in the system of two quadratic equations describing the ellipses' intersections, we replaced an ellipse equation by a line equation which reduced the degree and allowed analytic solution. Note that both approaches exploit the elegance of conics' manipulation in GAC by constructing a pair of lines containing the intersecting points and circumscribed ellipses simply calculated by GAC scaling with a factor determined according to Proposition  \ref{scale_factor}. 
	
	Let us point out that even the preparation of initial trajectories is highly geometric. Fitting a conic with prescribed properties in GAC eliminates an error in numerical solution to our switched system. Indeed, all trajectories will be precisely of a given type, i.e. co-centred and axis-aligned. Consequent GAC transformations do not change these properties and do not input any numerical errors. Indeed, the only place for a rounding error is the calculation of fractions and square roots because all operations in GAC may be converted to sums of products, \cite{BDVHS,b1}.
	
	Our algorithm generates a switching path that is optimal with respect to a number of switching points. Indeed, by constructing circumscribed ellipses we minimize the number of trajectories involved and therefore the number of switching points. Finally let us note that we applied our algorithm on systems that are controllable, \cite{Der}, i.e.  existence of a trajectory connecting the initial and final points is guaranteed.  
	%
	%
	
	%

	
	\section*{Acknowledgement}
	The research was supported by a grant no. FSI-S-20-6187.
	
	\bibliographystyle{siam} 
	\bibliography{sws_plain.bib}

	\noindent
	ANNA DEREVIANKO, PETR VA\v S\'IK 
	\newline
	Institute of Mathematics
	\newline 
	Brno University of Technology
	\newline
	Faculty of Mechanical Engineering, 
	\newline
	Technick\'a 2
	\newline
	616 69 Brno, Czech Republic
	\newline
	\noindent
	e-mail:{\tt \ derev.anna.08.96@gmail.com,\ Petr.Vasik@vutbr.cz }

	
	%
	%
	%
	%
\end{document}